\documentclass[12pt,reqno]{article}

\usepackage{amsmath,amssymb,amsthm}
\usepackage{amsfonts}
\usepackage{amscd}
\usepackage{graphicx}
\usepackage{xcolor}
\usepackage{float}
\usepackage{booktabs}

\usepackage{mathtools}
\usepackage[toc,page]{appendix}
\usepackage{microtype}
\usepackage{tikz}
\usepackage[colorlinks=true,linkcolor=blue,citecolor=blue,urlcolor=blue]{hyperref}

\def\modd#1 #2{#1\ \mbox{\rm (mod}\ #2\mbox{\rm )}}

\makeatletter
\renewcommand*{\@fnsymbol}[1]{\ensuremath{\ifcase#1\or *\or \ddagger\or \S\or \P\or \|\or **\or \dagger\dagger \or \ddagger\ddagger \else\@ctrerr\fi}}
\makeatother

\usepackage{fullpage}

\usepackage{latexsym}
\usepackage{epsf}
\usepackage{breakurl}

\newcommand{\Z}{\mathbb Z}
\newcommand{\N}{\mathbb N}

\newcommand{\vp}{\nu_3}
\newcommand{\vTwo}{\nu_2}

\newcommand{\Q}{\mathbb{Q}}

\begin{document}

\theoremstyle{plain}
\newtheorem{theorem}{Theorem}
\newtheorem{corollary}[theorem]{Corollary}
\newtheorem{lemma}[theorem]{Lemma}
\newtheorem{proposition}[theorem]{Proposition}

\theoremstyle{definition}
\newtheorem{definition}[theorem]{Definition}
\newtheorem{example}[theorem]{Example}
\newtheorem{conjecture}[theorem]{Conjecture}

\theoremstyle{remark}
\newtheorem{remark}[theorem]{Remark}

\author{Stijn Cambie\thanks{Department of Computer Science, KU Leuven Campus Kulak-Kortrijk, 8500 Kortrijk, Belgium. Supported by a postdoctoral fellowship by the Research Foundation Flanders (FWO) with grant number 1225224N. E-mail: \href{mailto:stijn.cambie@hotmail.com}{\tt stijn.cambie@hotmail.com}.} \and Erik Kalviainen\thanks{Waterloo, Ontario, Canada. E-mail: \href{mailto:ekalvi@gmail.com}{\tt ekalvi@gmail.com}.}  \and J. Shallit\thanks{
School of Computer Science,
University of Waterloo,
Waterloo, ON  N2L 3G1 
Canada. E-mail: 
\href{mailto:shallit@uwaterloo.ca}{\tt shallit@uwaterloo.ca}. Research supported by NSERC Grant RGPIN 2024-03725.}}

\title{Brown-Gerver-Ramsey Theorems in Small Dimensions}

\maketitle

\begin{abstract}
We consider infinite walks in $\N^k$ with standard unit basis vector steps that avoid
$t$ collinear points, and show that these walks exist for $(k,t) \in \{(6,3), (4,4), (3,7)\}$.  In particular, our construction for $k = 3$ improves the previous bound $189$, obtained by Lidbetter, to $7$.  Our results also imply the existence of infinite words over small finite alphabets that are weakly abelian squarefree (resp., weakly abelian cubefree, weakly abelian 6th-power-free).
\end{abstract}

\section{Introduction}

In 1971 Tom C. Brown observed that every infinite walk on the lattice points of the plane, using only the steps $(0,1)$ and $(1,0)$, must contain $n$ collinear points for all $n \geq 2$.  He submitted his result to the Advanced Problem section of the {\it American Mathematical Monthly} \cite{Bro71b}, and a nice solution was published in 1972 by Peter Montgomery \cite{Mo72}.

Of course, it is easy to create such a walk for which every line intersects it at only finitely many points:  for example the following walk resembling a parabola, set $P_0 = (0,0)$, and
$$ P_{n+1} = \begin{cases}
    P_n + (0,1), & \text{if $P_n = (k^2,k-1)$ for some integer $k$}; \\
    P_n + (1,0), & \text{otherwise.} 
    \end{cases}
$$

Brown's observation was generalized by Gerver and Ramsey to the case where the steps are chosen from an arbitrary fixed finite set $S \subseteq \N^2$ \cite{GR79}.
Gerver and Ramsey also found an upper bound on the walk length (defined as number of steps) guaranteeing the existence of $n$ collinear points, as a function of the size of $S$ and the norm of the largest step. Gerver \cite{Ger79} proved a superpolynomial lower bound on the walk length.  These upper and lower bounds were recently improved by
Korsky \cite{Ko26a}.

Gerver and Ramsey addressed the same question in $\N^3$.  For that version of the problem, they proved there is an infinite walk, using only the
unit vector steps 
$(1,0,0), (0,1,0), (0,0,1)$, with no
$5^{11} + 1 = 48828126$ collinear points.
(This number was recently reduced to 189
by Lidbetter \cite{Lid24}.)
They also raised the question of what dimension $k$ is required to avoid
three collinear points with a finite set of steps.

Recently this latter question was solved by the first two authors \cite{CK26}.  Their preprint proved the existence of an infinite walk in $\Z^3$ using a set of 16 steps. 

In this paper we return to the original question of Brown and restrict our attention to walks in $\N^k$ using only the {\it standard unit basis vectors}, that is, those of the form $${\bf e}_{k,i} = (\, \overbrace{0,\ldots, 0}^i, 1, 
\overbrace{0,\ldots, 0}^{k-i-1} \,)$$
for $0 \leq i < k$.  By the points of a walk we mean its visited lattice points.  The results in \cite{CK26} immediately show that there is an infinite walk in $\N^{16}$, using only the standard unit basis vectors, with no $3$ collinear points \cite{Sha26}, but we improve this here.

We now summarize the main results of this paper.  All these walks use only the standard unit basis vectors as steps.

\begin{itemize}
\item Theorem~\ref{thm:main}: there is an infinite walk on $\N^6$
with no $3$ collinear points.  In particular this supersedes the recent paper of Korsky \cite{Ko26b}.

\item Theorem~\ref{thm:d4}:  there is an infinite walk on $\N^4$ with no $4$ collinear points.

\item Theorem~\ref{thm:d3}:  there is an infinite walk on $\N^3$ with no $7$ collinear points.
\end{itemize}

The proofs are presented in Sections~\ref{sec:6} and~\ref{sec:34}.
Readers less interested in combinatorics of words (equivalence explained in Section~\ref{sec2}), can skip Corollary~\ref{cor:dim6wordformulation} in Section~\ref{sec:6}
and Section~\ref{sec5}.

We do not know if these results can be improved for unit basis vector steps.  For example, we cannot currently rule out an infinite walk on $\N^3$ with
no $4$ collinear points.  We have
constructed a finite walk of length $32768$ with this property.   However, as Brown observed \cite{Bro71a}, the longest
walk on $\N^3$ with no $3$ collinear points is of length $7$.

We also cannot rule out an infinite walk
in $\N^4$ with no $3$ collinear points.  We have constructed a finite walk of length $5500$ with this property.

Throughout, $\N=\{0,1,2,\ldots\}$, and word and vertex indices start
at zero.

Our constructions use $p$-adic valuations for the primes $p =2$ and $p = 3$.  For a nonzero integer $n$, we let $\nu_p (n)$ be the exponent of the highest power of $p$ dividing $n$. This is extended to nonzero rational numbers $m/n$ by
$\nu_p(m/n) = \nu_p(m) - \nu_p (n)$.

\section{Weak abelian powers}
\label{sec2}

Our results have a natural interpretation in combinatorics on words.  Let $\Sigma_k = \{0,1,\ldots, k-1\}$ be an alphabet of cardinality $k$.  For a word $w$, let
$|w|_a$ be the number of occurrences of
the letter $a$ in $w$.  For
$w \in \Sigma_k^*$ define
its Parikh vector
$\psi(w)$ to be $(|w|_0, |w|_1, \ldots,
|w|_{k-1})$.
We say that
a word $w$ is a {\it weak abelian $t$'th power} if $w = x_1 x_2 \cdots x_t$, where
each word $x_i$ is nonempty and
$${{\psi(x_1)} \over {|x_1|}} = 
{{\psi(x_2)} \over {|x_2|}} = \cdots =
{{\psi(x_t)} \over {|x_t|}}.$$
We say a word $w$ {\it avoids\/} weak abelian $t$'th powers if no nonempty factor is a weak abelian $t$'th power.  See, for example, \cite{AP16,FP23}.

\begin{proposition}
Let $w=w_0w_1\cdots w_{n-1}\in\Sigma_k^n$. Define
\[
P_i=\sum_{0 \leq j < i}{\bf e}_{k,w_j}
\]
for $0 \leq i \leq n$.
Then the walk $ P_0,\ldots,P_n$ contains no $t+1$ 
collinear points if and only if $w$ avoids weak abelian $t$'th powers.
\label{prop1}
\end{proposition}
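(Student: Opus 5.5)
The plan is to prove both directions directly from two observations about the walk. First, each step increases the coordinate sum by exactly one, so $\|P_i\|_1 = i$ and the points $P_0,\ldots,P_n$ are pairwise distinct. Second, for $a<b$ we have $P_b - P_a = \psi(w_a w_{a+1}\cdots w_{b-1})$.

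For the backward direction we show that a weak abelian $t$'th power factor gives $t+1$ collinear points. Suppose $w$ has a factor $x_1x_2\cdots x_t$ that is a weak abelian $t$'th power. Let it start at position $i_0$, and let $i_0<i_1<\cdots<i_t$ be the boundaries between consecutive blocks, so $x_j = w_{i_{j-1}}\cdots w_{i_j-1}$. Put $v=\psi(x_1)/|x_1|$, a fixed nonzero vector. Then $P_{i_j}-P_{i_{j-1}} = \psi(x_j) = |x_j|\,v$ for every $j$. Hence every $P_{i_j}$ lies on the line $P_{i_0}+\mathbb{R}v$. Since the indices are distinct, these are $t+1$ distinct collinear points.

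For the forward direction, suppose $t+1$ of the points are collinear. By distinctness we may list them as $P_{i_0},\ldots,P_{i_t}$ with $i_0<i_1<\cdots<i_t$. Let $x_j$ be the factor $w_{i_{j-1}}\cdots w_{i_j-1}$, which is nonempty, so $x_1\cdots x_t$ is a factor of $w$. Collinearity means each difference $P_{i_j}-P_{i_{j-1}}=\psi(x_j)$ is a real multiple of one common nonzero direction $u$. Each $\psi(x_j)$ is nonzero with nonnegative entries, so all these multiples have the same sign. Equivalently, $\psi(x_j)=\lambda_j u$ with $\lambda_j>0$ after possibly replacing $u$ by $-u$.

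The one point needing care is the normalization. Summing coordinates gives $|x_j| = \lambda_j\sum_m u_m$, and the sum is nonzero. Therefore $\psi(x_j)/|x_j| = u/\sum_m u_m$ is the same for every $j$. So $x_1\cdots x_t$ is a weak abelian $t$'th power, which completes the forward direction.

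Neither direction has a real obstacle. The only care needed is the ordering of collinear points by index, which is justified because the coordinate sum equals the index, and the positivity of the $\lambda_j$.
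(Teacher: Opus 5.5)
Your proposal is correct and follows essentially the same route as the paper: consecutive displacements between the ordered collinear points are positive multiples of a common vector, and normalizing by coordinate sums gives equal normalized Parikh vectors, with the converse read off directly. You simply make explicit the details the paper leaves implicit, namely that the points are distinct and ordered by index because $\|P_i\|_1=i$, and why all the multiples have the same sign.
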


\begin{proof}
If there are $t+1$ collinear points, their
consecutive displacements are positive multiples of a common vector;
division by their coordinate sums gives the same normalized Parikh
vector for all $t$ intervening nonempty blocks. Conversely, such $t$
blocks give $t+1$ collinear boundary points.
\end{proof}

Thus our results can be restated as
\begin{itemize}
\item There is an infinite word over $\Sigma_6$
with no weak abelian squares.  

\item There is an infinite word over $\Sigma_4$ with no weak abelian cubes.

\item There is an infinite word over $\Sigma_3$ with no weak abelian $6$th powers.
\end{itemize}
We also produce explicit constructions for these three words, as the image of a fixed point of a morphism.

\begin{remark}
Given a finite word $w=w_0w_1\cdots w_{n-1}$, we can determine
the smallest exponent $p\geq2$ such that $w$ avoids weak abelian
$p$'th powers in $O(n^2\log n)$ time and $O(n)$ space. For each $0\leq j<n$, compute
\[
 \frac{\psi(w_jw_{j+1}\cdots w_{\ell-1})}{\ell-j}
 \qquad (j<\ell\leq n),
\]
sort these vectors lexicographically, separately for each $j$, and find the maximum multiplicity
$M$ of an identical vector. Then $p=M+1$.
This can be improved to $O(n^2)$ time
and $O(n)$ space using
radix sort with the appropriate buckets.
\end{remark}

\section{Dimension 6}\label{sec:6}

\begin{theorem}
There is an infinite walk in $\N^6$, using only the standard unit basis vectors as steps, with no $3$ collinear points.
\label{thm:main}
\end{theorem}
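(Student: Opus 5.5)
By Proposition~\ref{prop1}, it suffices to build an infinite word over $\Sigma_6$ with no weak abelian squares. Such a word must have no factor $xy$ with $\psi(x)/|x|=\psi(y)/|y|$. Equivalently, we need an infinite sequence of lattice points with no three collinear.

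The paper's hint about $\nu_2$ and $\nu_3$ suggests splitting $\Sigma_6$ into two groups of three letters. Both coordinates can come from valuation-driven sequences, as in the classical constructions of abelian-squarefree words. I would write the letter $w_n$ as a pair $(a_n,b_n)$ with $a_n\in\{0,1\}$ and $b_n\in\{0,1,2\}$. Here $a_n$ is determined by $\nu_2(n+1)$ modulo something (for instance the parity of $\nu_2(n+1)$, as in the period-doubling sequence). Likewise $b_n$ is determined by $\nu_3(n+1)$, or by the last nonzero ternary digit of $n+1$. Projecting the Parikh vector onto the $a$-part and the $b$-part gives two independent walks. Three collinear points in $\N^6$ would force collinearity in both projections simultaneously, and also force the ratios of block lengths to be compatible.

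The core argument goes as follows. Suppose $xy$ is a weak abelian square with $|x|=m$, $|y|=\ell$ and common density vector $v$. The first step is to compute, for each letter class, the counting function $N_c(i,j)$ of positions in $[i,j)$ with a given valuation class. This function equals an explicit linear term $(j-i)\cdot\delta_c$ plus a bounded correction depending only on the base-2 (resp. base-3) expansions of $i$ and $j$. The equality $\psi(x)/m=\psi(y)/\ell$ then becomes
\[
\ell\,\bigl(\varepsilon_c(i)-\varepsilon_c(i+m)\bigr)=m\,\bigl(\varepsilon_c(i+m)-\varepsilon_c(i+m+\ell)\bigr)
\]
for each class $c$, where $\varepsilon_c$ is the bounded error term. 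The second step compares $2$-adic and $3$-adic valuations on both sides. For instance, I would write $m/\ell$ as a rational number $r$ and show that the $a$-equations force $\nu_2(r)$ into a restricted range, while the $b$-equations force incompatible conditions, such as contradictory values of $\nu_3(r)$ or a sign contradiction, unless $m=\ell$ and a genuine abelian square appears. The base sequences are chosen so that this last case is excluded.

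The main obstacle is the first step: choosing the letter-assignment rule so that the correction terms are simple enough for the $p$-adic comparison to close every case. I expect a product coding of a $2$-adic and a $3$-adic sequence to fail in some corner cases. If so, I would instead take $w$ as the image of a fixed point of a uniform morphism (as the paper announces) and derive the counting identities via the morphism's self-similarity. I would induct on $|xy|$: desubstitute a weak abelian square into a shorter one, and check the short cases by the $O(n^2)$ algorithm of the Remark.
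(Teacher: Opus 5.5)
This is not yet a proof, and the mechanism you propose cannot work. The instances you name fail immediately. With $a_n=\nu_2(n+1)\bmod 2$ and $b_n=\nu_3(n+1)\bmod 3$ you get $w_3=w_4=(0,0)$ (from $n+1=4,5$). With $b_n$ the last nonzero ternary digit of $n+1$ you get $w_2=w_3=(0,1)$ (from $n+1=3,4$). A repeated letter is already an abelian square, i.e.\ three collinear points.

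More seriously, your Step 1 is self-defeating. For your codings the correction is in fact of logarithmic size; for the parity of $\nu_2$ it is, up to a factor $\frac13$, an alternating binary digit sum. But boundedness would be worse, not better. Suppose every count satisfies $N_c(0,j)=j\delta_c+\varepsilon_c(j)$ with $\varepsilon_c$ bounded. Valuation-defined classes have rational densities $\delta_c$ with a common denominator $D$, so the error vector $E(j)=(\varepsilon_c(j))_c$ lies in a bounded subset of $\frac1D\Z^6$ and takes only finitely many values. Infinitely many $j$ then share one value $E^*$, and all the points $P_j=j\delta+E^*$ lie on a single line. So such a word contains weak abelian $t$th powers for every $t$. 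For those triples both sides of your displayed identity are $0$, so no comparison of $2$-adic and $3$-adic valuations of bounded quantities can exclude them. What is needed is an \emph{unbounded}, self-similar displacement whose size is pinned down exactly by the valuation of the time difference.

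That is what the paper supplies, and your fallback does not. The fallback is an unspecified uniform morphism plus a ``desubstitution'' induction, and no such lemma is available here: the boundaries of $x$ and $y$ fall inside letter images, and when $|x|\ne|y|$ equality of normalized Parikh vectors is not inherited by preimages. The paper's argument for this theorem uses only $\nu_2$; the $\nu_3$ machinery serves the $\N^4$ and $\N^3$ results, not a split of $\Sigma_6$. It runs in three steps.

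First, it takes the Gaussian walk $z_n=\sum_{r<n}u_r$, $u_r=i^{\sigma_r}$, with $\sigma_0=0$, $\sigma_{2n}\equiv-\sigma_n$, $\sigma_{2n+1}\equiv1-\sigma_n \pmod 4$. This gives $z_{2n+\varepsilon}=(1+i)\overline{z_n}+\varepsilon\,\overline{u_n}$. When $\sigma_m=\sigma_n$, the chord $z_n-z_m$ is $(1+i)$ times the conjugate of a chord one dyadic level down, again between equal states. The descent ends at a sum of an odd number of units, which has odd norm. Hence $\nu_2(|z_n-z_m|^2)=\nu_2(n-m)$.

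Second, it sets $R_n=(d_{\sigma_n},z_n+c_{\sigma_n},n)$ with $d_0,\dots,d_3$ the vertices of a square. On a collinear triple $a<b<c$, with $p=b-a$ and $q=c-b$, the point $d_{\sigma_b}$ is a strict convex combination of $d_{\sigma_a}$ and $d_{\sigma_c}$, which forces $\sigma_a=\sigma_b=\sigma_c$. The common slope would then have squared-modulus valuation equal to each of $-\nu_2(p)$, $-\nu_2(q)$, $-\nu_2(p+q)$. This is impossible, since $\nu_2(p)=\nu_2(q)$ forces $\nu_2(p+q)>\nu_2(p)$.

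Third, the offsets $c_r$ make the eight possible transitions produce only six increment vectors. So a linear map $T$ with $T(P_n)=R_n$ pulls everything back to a unit-step walk in $\N^6$.

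Your proposal contains neither the exact chord-valuation identity nor the device forcing equal states on a collinear triple, and these are the heart of the argument.
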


We begin with a walk on the Gaussian integers, adapted from the construction in
\cite{CK26}.  Let $i = \sqrt{-1}$, and define $\sigma_n\in\{0,1,2,3\}$ recursively by $\sigma_0=0$ and
\begin{equation}\label{eq:states}
 \sigma_{2n}\equiv \modd{-\sigma_n} {4},
 \qquad
 \sigma_{2n+1}\equiv \modd{1-\sigma_n} {4}.
\end{equation}
For $n \geq 0$, define
\begin{equation}\label{eq:gaussian}
 u_n=i^{\sigma_n},
 \qquad
 z_n=\sum_{0\le r<n}u_r\in\Z[i].
\end{equation}
Thus $(z_n)$ is a nearest-neighbor walk in the Gaussian lattice.  For
$\varepsilon\in\{0,1\}$, Eq.~\eqref{eq:states} gives
\begin{equation}\label{eq:dyadic}
u_{2n+\varepsilon}=i^\varepsilon\,\overline{u_n},
 \qquad
 z_{2n+\varepsilon}
   =(1+i)\, \overline{z_n}+\varepsilon\, \overline{u_n},
\end{equation}
where $\overline{x}$ is the complex conjugate.
Indeed, the second identity follows by pairing
$u_{2r}+u_{2r+1}=(1+i)\overline{u_r}$.

\begin{lemma}\label{lem:chords}
If $0\leq m<n$ and $\sigma_m=\sigma_n$, then $z_m\ne z_n$ and
\begin{equation}\label{eq:chord-valuation}
 \vTwo\bigl(|z_n-z_m|^2\bigr)=\vTwo(n-m).
\end{equation}
\end{lemma}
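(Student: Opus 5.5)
We argue by strong induction on $n$, the larger index, and use the dyadic recursion \eqref{eq:dyadic} to reduce to smaller indices. Note first that $z_m\ne z_n$ follows from \eqref{eq:chord-valuation}, since the right-hand side is finite. So it suffices to prove the valuation identity. Throughout we use that $|1+i|^2=2$, so multiplying by $(1+i)$ raises $\vTwo(|\cdot|^2)$ by exactly one. We also use that conjugation preserves norms.

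\textbf{Step 1: parity of $\sigma$.} By \eqref{eq:states}, $\sigma_{2n}\equiv-\sigma_n$ and $\sigma_{2n+1}\equiv 1-\sigma_n \pmod 4$. Hence the parity of $\sigma_k$ equals the parity of the last binary digit of $k$ plus that of $\sigma_{\lfloor k/2\rfloor}$. Inductively, $\sigma_k\equiv s_2(k)\pmod 2$, where $s_2$ is the binary digit sum. More precisely, $\sigma_{2n+\varepsilon}$ determines $\varepsilon$ only together with $\sigma_n$. The key case split is on the parities of $m$ and $n$.

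\textbf{Step 2: same parity.} Write $m=2m'+\varepsilon$ and $n=2n'+\varepsilon$ with the same $\varepsilon$. Then $\sigma_m=\sigma_n$ forces $\sigma_{m'}=\sigma_{n'}$, since the map $\sigma\mapsto\varepsilon-\sigma$ is a bijection mod $4$. Thus $u_{m'}=u_{n'}$, and \eqref{eq:dyadic} gives
\[
z_n-z_m=(1+i)\,\overline{(z_{n'}-z_{m'})},
\]
because the $\varepsilon\overline{u}$ terms cancel. Therefore $\vTwo(|z_n-z_m|^2)=1+\vTwo(|z_{n'}-z_{m'}|^2)=1+\vTwo(n'-m')=\vTwo(n-m)$ by induction. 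The base case is $n'=m'$, which cannot occur since $m<n$ and the parities agree.

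\textbf{Step 3: different parity (the main obstacle).} Here $n-m$ is odd, so we must show $|z_n-z_m|^2$ is odd. This means $z_n-z_m\not\equiv 0 \pmod{1+i}$, i.e.\ the real and imaginary parts of $z_n-z_m$ have different parity. Since each step $u_r$ is a unit, $\mathrm{Re}+\mathrm{Im}$ of $z_k$ changes parity at every step. Hence $\mathrm{Re}(z_k)+\mathrm{Im}(z_k)\equiv k\pmod 2$, and so $\mathrm{Re}+\mathrm{Im}$ of $z_n-z_m$ is $\equiv n-m\equiv 1$. This gives $|z_n-z_m|^2\equiv 1\pmod 2$ directly, using $a^2+b^2\equiv a+b\pmod 2$.

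This argument in fact shows the odd case holds without using $\sigma_m=\sigma_n$. The hypothesis is needed only in Step 2, to make the $\varepsilon\overline{u}$ terms cancel. I therefore expect the real care to lie in verifying carefully that $\sigma_m=\sigma_n$ descends to $\sigma_{m'}=\sigma_{n'}$ in the same-parity case, and in setting up the induction on $n$ (or on $n-m$) cleanly. Step 1 then becomes unnecessary and can be dropped.
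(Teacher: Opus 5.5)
Your proof is correct and is essentially the paper's argument. The paper likewise descends through \eqref{eq:dyadic}, uses injectivity of $r\mapsto\varepsilon-r$ modulo $4$ to preserve equality of states, gains a factor $2$ in $|z_n-z_m|^2$ at each halving, and finishes with the same parity observation once the difference is odd; it simply iterates explicitly $t=\vTwo(n-m)$ times rather than phrasing this as strong induction. Your Step 1 is indeed superfluous, and the actual terminal case of your induction is Step 3 (odd difference), not ``$n'=m'$''.
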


\begin{proof}
Let $t=\vTwo(n-m)$ and set $(m_0,n_0)=(m,n)$.  We recursively
define pairs $(m_j,n_j)$ satisfying $n_j-m_j=(n-m)/2^j$ for $0\le j\le t$. Suppose that $j<t$.
Then $n_j-m_j$ is even, so $m_j$ and $n_j$ have the same parity.
Thus, for a unique $\varepsilon_j\in\{0,1\}$, we may write
\[
 m_j=2m_{j+1}+\varepsilon_j,
 \qquad
 n_j=2n_{j+1}+\varepsilon_j.
\]
The two maps
\[
 r\longmapsto-r,\qquad r\longmapsto1-r
\]
are injective on $\mathbb Z/4\mathbb Z$.  Hence the recursion~\eqref{eq:states}, together with
$\sigma_{m_j}=\sigma_{n_j}$, implies
$
 \sigma_{m_{j+1}}=\sigma_{n_{j+1}}
$.
In particular, $u_{m_{j+1}}=u_{n_{j+1}}$.  Applying Eq.~\eqref{eq:dyadic} with $n=m_{j+1}$ and
$n=n_{j+1}$, we get
\[
 z_{n_j}-z_{m_j}
   =(1+i)(\, \overline{z_{n_{j+1}}-z_{m_{j+1}}}\, ).
\]
Consequently,
\[
 |z_{n_j}-z_{m_j}|^2
   =2|z_{n_{j+1}}-z_{m_{j+1}}|^2.
\]
Iterating for $j=0,\ldots,t-1$ yields
\[
 |z_n-z_m|^2
   =2^t|z_{n_t}-z_{m_t}|^2.
\]

By construction, $n_t-m_t$ is odd.  Moreover,
\[
 z_{n_t}-z_{m_t}
   =\sum_{r=m_t}^{n_t-1}u_r
\]
is a sum of an odd number of elements of $\{1,i,-1,-i\}$.  Write this
sum as $x+iy$,  
where
$x+y$ is odd. Thus exactly one of $x$ and $y$ is odd, so
\[
 |z_{n_t}-z_{m_t}|^2=x^2+y^2
\]
is odd and, in particular, nonzero.  It follows that $z_n-z_m\ne0$ and
$ \vTwo\bigl(|z_n-z_m|^2\bigr)
   =t
   =\vTwo(n-m)$.
\end{proof}

\begin{proof}[Proof of Theorem~\ref{thm:main}]
Let 
$\delta_n = (\sigma_{n+1}-\sigma_n) \bmod4$.  From Eq.~\eqref{eq:states},
\begin{equation}\label{eq:increments}
 \delta_{2n}=1,
 \qquad
 \delta_{2n+1}\equiv-1-\delta_n\pmod4.
\end{equation}
Since $\delta_0=1$, induction gives $\delta_n\in\{1,2\}$ for every $n$.
Consequently, the only possible ordered pairs $(\sigma_n, \sigma_{n+1})$ are
\begin{equation}
 (0,1),\ (1,2),\ (2,3),\ (3,0),\
 (0,2),\ (1,3),\ (2,0),\ (3,1).
 \label{pairs}
\end{equation}

We now define two functions on $\{0,1,2,3\}$ by
\[
 (d_0,d_1,d_2,d_3)=(0,1,i,1+i),
 \qquad
 (c_0,c_1,c_2,c_3)=(0,0,0,1-i),
\]
and define the auxiliary points
\[
 R_n=(d_{\sigma_n},z_n+c_{\sigma_n},n).
\]
If $(\sigma_n,\sigma_{n+1})=(r,s)$, then
\begin{equation}\label{eq:shadow-step}
 R_{n+1}-R_n
 =\bigl(d_s-d_r,\ i^r+c_s-c_r,\ 1\bigr).
\end{equation}
The values \(d_r\) place the four possible values of \(\sigma_n\) at the extreme points of a square, while the offsets \(c_r\) make the possible increments of \(R_n\) collapse from eight ordered pairs to the six vectors
 shown in Figure~\ref{fig:steps}.
\begin{figure}[htb]
\centering
\begin{minipage}[c]{.29\linewidth}
\centering
\begin{tikzpicture}[x=1.25cm,y=1.05cm,every node/.style={font=\small}]
  \draw[gray!70,line width=.6pt] (0,0) rectangle (1,1);
  \fill (0,0) circle (1.5pt) node[below left] {$d_0=0$};
  \fill (1,0) circle (1.5pt) node[below right] {$d_1=1$};
  \fill (0,1) circle (1.5pt) node[above left] {$d_2=i$};
  \fill (1,1) circle (1.5pt) node[above right] {$d_3=1+i$};
\end{tikzpicture}
\end{minipage}\hfill
\begin{minipage}[c]{.67\linewidth}
\centering
\begin{tabular}{c@{\qquad}c}
\toprule
$(\sigma_n, \sigma_{n+1})$ & $R_{n+1}-R_n$ \\
\midrule
$01$       & $(1,\ 1,\ 1)$ \\
$12$       & $(-1+i,\ i,\ 1)$ \\
$23$       & $(1,\ -i,\ 1)$ \\
$30$       & $(-1-i,\ -1,\ 1)$ \\
$02,13$    & $(i,\ 1,\ 1)$ \\
$20,31$    & $(-i,\ -1,\ 1)$ \\
\bottomrule
\end{tabular}
\end{minipage}
\caption{Collapse to six steps.}
\label{fig:steps}
\end{figure}

We claim that no three points $R_n$ are collinear.  Suppose otherwise, and
choose $a<b<c$ such that $R_a,R_b,R_c$ are collinear.  Write
$p=b-a$ and $q=c-b$.  Since the last coordinate of $R_n$ is $n$, the affine
parameter is fixed:
\begin{equation}\label{eq:slopes}
 \frac{R_b-R_a}{p}
 =\frac{R_c-R_b}{q}
 =\frac{R_c-R_a}{p+q}.
\end{equation}
In the first complex coordinate this says
\begin{equation}\label{eq:convexity}
 d_{\sigma_b}
 =\frac{q}{p+q}d_{\sigma_a}
  +\frac{p}{p+q}d_{\sigma_c}.
\end{equation}
The four numbers $d_0,d_1,d_2,d_3$ are the vertices of a square. Since all four are extreme points of their convex hull, the strict convex combination in
\eqref{eq:convexity} forces
\begin{equation}\label{eq:equal-states}
 \sigma_a=\sigma_b=\sigma_c.
\end{equation}

The correction terms $c_{\sigma_n}$ now cancel from the second complex
coordinates in~\eqref{eq:slopes}.  Hence
\begin{equation}\label{eq:gaussian-slopes}
 \frac{z_b-z_a}{p}
 =\frac{z_c-z_b}{q}
 =\frac{z_c-z_a}{p+q}.
\end{equation}
By Lemma~\ref{lem:chords}, for example, we get the square-modulus valuations
\[
 \vTwo\!\left(\left|\frac{z_b-z_a}{p}\right|^2\right)
 =\vTwo\bigl(|z_b-z_a|^2\bigr)-2\vTwo(p)=-\vTwo(p).
\]
Thus the three nonzero fractions in~\eqref{eq:gaussian-slopes} have
squared-modulus valuations
\begin{equation}\label{eq:slope-valuations}
 -\vTwo(p),\qquad -\vTwo(q),\qquad -\vTwo(p+q),
\end{equation}
respectively.  Equality in~\eqref{eq:gaussian-slopes} would therefore give
\(
 \vTwo(p)=\vTwo(q)=\vTwo(p+q).
\)
This is impossible: if $p$ and $q$ have the same $2$-adic valuation, then
$p+q$ has a larger one.  Thus no three points of the auxiliary walk $(R_n)$ are collinear.

Finally, list the six possible increments of $R_n$ in Figure~\ref{fig:steps} as
$v_1,\ldots,v_6$.  Starting from $P_0=0$, replace every occurrence of $v_j$
by the standard basis vector ${\bf e}_{6, j-1}\in\N^6$.  The real-linear map
\[
 T:\mathbb R^6\longrightarrow\mathbb C^2\times\mathbb R,
 \qquad T({\bf e}_{j-1})=v_j,
\]
satisfies $T(P_n)=R_n$ for every $n$.  Moreover, the sum of the coordinates of $P_n$ is $n$, so the
points $P_n$ are distinct.  A collinear triple among
the $P_n$ would map to three distinct collinear points among the $R_n$, which
we have just proved impossible.  This completes the construction.
\end{proof}

\begin{corollary}\label{cor:dim6wordformulation}
The steps of the walk are given by
the unit basis vectors
$({\bf e}_{6,q_i})_{i \geq 0}$,
where $q_i$ is the image under
$\tau$ of the fixed point of the
morphism $\gamma$, defined as follows:
\begin{table}[H]
\centering
\begin{tabular}{c|c|c}
$j$ & $\gamma(j)$ & $\tau(j)$ \\
\hline
0 & 01 & 0 \\
1 & 20 & 1 \\
2 & 34 & 2 \\
3 & 25 & 3 \\
4 & 62 & 4 \\
5 & 03 & 1 \\
6 & 67 & 5 \\
7 & 36 & 4 \\
\end{tabular}
\end{table}
Thus $$\tau(\gamma^\omega(0)) = 01203401215201203403543401203401 \cdots$$ is weakly abelian squarefree.
\end{corollary}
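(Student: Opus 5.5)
The plan is to recast the step sequence of the walk $(P_n)$ from the proof of Theorem~\ref{thm:main} as a $2$-automatic sequence. The step from $P_n$ to $P_{n+1}$ is ${\bf e}_{6,j-1}$, where $v_j=R_{n+1}-R_n$. By \eqref{eq:shadow-step}, this increment depends only on the ordered pair $(\sigma_n,\sigma_{n+1})$, which is one of the eight pairs in \eqref{pairs}. So I will fix a bijection between these eight pairs and the letters $\{0,\ldots,7\}$, and set $x_n$ to be the letter encoding $(\sigma_n,\sigma_{n+1})$. I then need two facts: that $x=x_0x_1x_2\cdots$ is the fixed point $\gamma^\omega(0)$, and that $\tau(x_n)$ is the index of the step at time $n$.

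For the first fact, I use the recursion \eqref{eq:states}. It gives $\sigma_{2n}\equiv-\sigma_n$, $\sigma_{2n+1}\equiv1-\sigma_n$ and $\sigma_{2n+2}\equiv-\sigma_{n+1}\pmod 4$. Hence the pair $(\sigma_n,\sigma_{n+1})=(r,s)$ determines both $(\sigma_{2n},\sigma_{2n+1})=(-r,1-r)$ and $(\sigma_{2n+1},\sigma_{2n+2})=(1-r,-s)$. This defines a $2$-uniform morphism on pairs, $(r,s)\mapsto(-r,1-r)\,(1-r,-s)$, which maps the set \eqref{pairs} into itself. By construction, the image of $x_n$ is $x_{2n}x_{2n+1}$.

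I will choose the labelling so that this morphism is exactly $\gamma$. Reading $\gamma$ forces the labelling:
\begin{itemize}
\item letter $0$ is $(0,1)$, since $\sigma_0=0$ and $\sigma_1=1$;
\item letter $1$ is $(1,3)$, since $(0,1)\mapsto(0,1)(1,3)$;
\item letter $2$ is $(3,0)$, since $(1,3)\mapsto(3,0)(0,1)$;
\item continuing in the same way gives $3=(0,2)$, $4=(2,3)$, $5=(1,2)$, $6=(2,0)$ and $7=(3,1)$.
\end{itemize}
I will then check all eight images against the table for $\gamma$. This is a finite verification. Since $x_0=0$ and $\gamma(0)$ begins with $0$, the identity $\gamma(x_n)=x_{2n}x_{2n+1}$ for all $n$ gives $x=\gamma^\omega(0)$ by a routine induction on prefixes of length $2^k$.

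For the second fact, I compare $\tau$ with Figure~\ref{fig:steps}. Listing the six increments in table order as $v_1,\ldots,v_6$, the pair $(0,1)$ should map to $0$. The pairs $(1,3)$ and $(0,2)$ (letters $1$ and $5$ in $\gamma$'s labels) should both map to $1$, as in the collapsed row ``$02,13$''. Similarly, $(3,0)\mapsto 2$, $(2,3)\mapsto 3$, and the pair $(2,0),(3,1)$ should map to the common value $4$. The remaining pair $(1,2)$ maps to $5$. A point to watch is that $\gamma$'s labels are not in the order of \eqref{pairs}, and the listing of $v_1,\ldots,v_6$ must match the column $\tau(j)$. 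So I will fix the ordering $v_1,\ldots,v_6$ to be whatever $\tau$ dictates. The proof of Theorem~\ref{thm:main} permits any ordering of the six increments, so this costs nothing.

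The final clause, that $\tau(\gamma^\omega(0))$ avoids weak abelian squares, then follows from Theorem~\ref{thm:main} together with Proposition~\ref{prop1} with $t=2$, applied to every finite prefix. The only real obstacle is bookkeeping: getting the labelling consistent simultaneously with $\gamma$ and with $\tau$. I would settle it by tabulating the eight pairs with their images once. Computing the displayed prefix is then a sanity check.
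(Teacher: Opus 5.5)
Your strategy is the paper's: encode $(\sigma_n,\sigma_{n+1})$ as a letter, observe that \eqref{eq:states} induces the $2$-uniform substitution $(r,s)\mapsto(-r,1-r)(1-r,-s)$ on pairs, match it to $\gamma$, and check that the fibres of $\tau$ are exactly the two collapses in Figure~\ref{fig:steps}. However, the labelling you write down is wrong from letter $3$ onward, so both verifications fail as stated.

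The correct labelling is forced as follows.
\begin{itemize}
\item Since $(3,0)\mapsto(1,2)(2,0)$ and $\gamma(2)=34$, you need $3=(1,2)$ and $4=(2,0)$.
\item Then $\gamma(3)=25$ together with $(1,2)\mapsto(3,0)(0,2)$ gives $5=(0,2)$.
\item Next, $\gamma(4)=62$ together with $(2,0)\mapsto(2,3)(3,0)$ gives $6=(2,3)$.
\item Finally, $\gamma(6)=67$ gives $7=(3,1)$.
\end{itemize}
This is exactly the paper's map $\pi$. With your labelling ($3=(0,2)$, $4=(2,3)$, $5=(1,2)$, $6=(2,0)$), $\gamma(2)$ would have to be $56$, so your promised eight-case check fails. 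Moreover, $\tau$ would then identify $(1,3)$ with $(1,2)$, and $(2,3)$ with $(3,1)$. Their increments differ: $(i,1,1)$ versus $(-1+i,i,1)$, and $(1,-i,1)$ versus $(-i,-1,1)$. So your second fact would fail as well.

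Your $\tau$ paragraph in fact silently uses $5=(0,2)$, which is the correct labelling, so the two halves of the proposal are inconsistent with each other. Also, under the correct labelling $\tau$ sends $(1,2)$ (letter $3$) to $3$ and $(2,3)$ (letter $6$) to $5$, the reverse of what you state. That swap is harmless, since only the fibres $\{1,5\}$ and $\{4,7\}$ matter and the coordinate order is free, but it should be stated correctly.

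Once the labelling is fixed, the rest of your argument goes through and coincides with the paper's proof. This includes uniqueness of the fixed point of $\gamma$ beginning with $0$, the permutation-of-coordinates remark, and Proposition~\ref{prop1} applied to finite prefixes.
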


\begin{proof}
Let $\mathbf t=t_0t_1t_2\cdots=\gamma^\omega(0)$.
Associate each letter with an ordered pair  by
\[
\begin{array}{c|cccccccc}
j&0&1&2&3&4&5&6&7\\ \hline
\pi(j)&(0,1)&(1,3)&(3,0)&(1,2)&(2,0)&(0,2)&(2,3)&(3,1).
\end{array}
\]
We claim that
\[
\pi(t_n)=(\sigma_n,\sigma_{n+1}) \qquad(n\geq0).
\]
Indeed, if $\pi(j)=(r,s)$ and $\gamma(j)=j_0j_1$, direct
inspection of the table gives
\[
\pi(j_0)=(-r,1-r),\qquad
\pi(j_1)=(1-r,-s),
\]
where all entries are reduced modulo $4$. On the other hand,
the recurrence~\eqref{eq:states} gives
\begin{align*}
(\sigma_{2n},\sigma_{2n+1})
  &=(-\sigma_n,1-\sigma_n),\\
(\sigma_{2n+1},\sigma_{2n+2})
  &=(1-\sigma_n,-\sigma_{n+1}),
\end{align*}
again modulo $4$. Since
$\pi(t_0)=\pi(0)=(0,1)=(\sigma_0,\sigma_1)$ and
$t_{2n}t_{2n+1}=\gamma(t_n)$, the claim follows by induction.

Now, if $\pi(t_n)=(r,s)$, the corresponding increment of the
auxiliary walk is
\[
R_{n+1}-R_n=(d_s-d_r,\ i^r+c_s-c_r,\ 1).
\]
By Figure~\ref{fig:steps}, the pairs associated with letters
$1$ and $5$, namely $(1,3)$ and $(0,2)$, give the same
increment. Similarly, the pairs associated with letters $4$
and $7$, namely $(2,0)$ and $(3,1)$, give the same increment.
These are the only identifications among the eight pairs.
They are exactly the identifications made by $\tau$, since
\[
\tau(1)=\tau(5)=1,\qquad
\tau(4)=\tau(7)=4,
\]
while the remaining four letters receive the distinct labels
$0,2,3,5$.

Thus $q_n=\tau(t_n)$ labels precisely the increment
$R_{n+1}-R_n$. Replacing these six increment types by
${\bf e}_{6,0},\ldots,{\bf e}_{6,5}$ therefore gives the walk
constructed in the proof of Theorem~\ref{thm:main}, up to a
permutation of coordinates. This walk has no three collinear
points, so its step word $\tau(\gamma^\omega(0))$ is weakly
abelian squarefree.
\end{proof}

\section{Dimensions $4$ and $3$}\label{sec:34}

The constructions here are similar to those of the previous section,
except that we now use base-$3$ representations and the field norm
\[
 N_{\Q(\sqrt3)/\Q}(X+Y\sqrt3)=X^2-3Y^2.
\]

Write $n=\sum_{j\ge0}f_j 3^j$ in base $3$ and set
\[
 a_n=(-1)^{f_1+f_3+\cdots},\qquad
 b_n=(-1)^{f_0+f_2+\cdots},\qquad
 R_n=(X_n,Y_n,n),
\]
where $X_n=\sum_{j<n}a_j$ and $Y_n=\sum_{j<n}b_j$. Appending a ternary
digit gives, for $r=0,1,2$,
\begin{equation}\label{eq:rec}
 a_{3n+r}=b_n,\qquad b_{3n+r}=(-1)^r a_n,
\end{equation}
and therefore
\begin{equation}\label{eq:sums}
 X_{3n+r}=3Y_n+r b_n,\qquad
 Y_{3n+r}=X_n+\varepsilon_r a_n,
 \quad(\varepsilon_0,\varepsilon_1,\varepsilon_2)=(0,1,0).
\end{equation}

Now we state and prove an analogue of Lemma~\ref{lem:chords}.

\begin{lemma}\label{lem:ternary-chords}
If $0\leq m<n$ and $(a_m,b_m)=(a_n,b_n)$, then
$(X_n-X_m)^2-3(Y_n-Y_m)^2$ is nonzero and
\begin{equation}\label{eq:val}
 \vp\!\left((X_n-X_m)^2-3(Y_n-Y_m)^2\right)=\vp(n-m).
\end{equation}
\end{lemma}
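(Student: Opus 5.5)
We follow the proof of Lemma~\ref{lem:chords}, with base $3$ in place of base $2$ and the norm form $X^2-3Y^2$ in place of $|\cdot|^2$. Write $\Delta X=X_n-X_m$, $\Delta Y=Y_n-Y_m$ and $N(m,n)=\Delta X^2-3\Delta Y^2$. Let $t=\vp(n-m)$. We descend along pairs $(m_j,n_j)$ with $n_j-m_j=(n-m)/3^j$. While $j<t$, the difference $n_j-m_j$ is divisible by $3$, so $m_j\equiv n_j \pmod 3$. Hence we can write $m_j=3m_{j+1}+r$ and $n_j=3n_{j+1}+r$ with a common digit $r\in\{0,1,2\}$.

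By \eqref{eq:rec}, $a_{m_j}=b_{m_{j+1}}$ and $b_{m_j}=(-1)^r a_{m_{j+1}}$, and likewise for $n_j$. So the hypothesis $(a_{m_j},b_{m_j})=(a_{n_j},b_{n_j})$ passes to $(m_{j+1},n_{j+1})$. Subtracting the two instances of \eqref{eq:sums}, the correction terms $r b$ and $\varepsilon_r a$ cancel, and we get
\[
X_{n_j}-X_{m_j}=3\,(Y_{n_{j+1}}-Y_{m_{j+1}}),\qquad
Y_{n_j}-Y_{m_j}=X_{n_{j+1}}-X_{m_{j+1}}.
\]
Writing $\Delta X',\Delta Y'$ for the differences at level $j+1$, this gives
\[
N(m_j,n_j)=9\,\Delta Y'^2-3\,\Delta X'^2=-3\,N(m_{j+1},n_{j+1}).
\]
This is the analogue of multiplying by $1+i$: in $\Z[\sqrt3]$ it is multiplication by $\sqrt3$ combined with a swap, and the norm of $\sqrt3$ is $-3$. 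Iterating $t$ times yields
\[
N(m,n)=(-3)^t\,N(m_t,n_t).
\]

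It remains to show that $N(m_t,n_t)$ is nonzero with $3$-adic valuation $0$, i.e.\ that $3\nmid x^2-3y^2$ for the level-$t$ differences $x=X_{n_t}-X_{m_t}$ and $y=Y_{n_t}-Y_{m_t}$. Since $x^2-3y^2\equiv x^2 \pmod 3$, it suffices to show $3\nmid x=\sum_{m_t\le r<n_t} a_r$, where the length $\ell=n_t-m_t$ is not divisible by $3$. This is the main obstacle. Unlike the parity argument in Lemma~\ref{lem:chords}, a sum of $\pm1$'s of length prime to $3$ can still be divisible by $3$. So we must use the structure of $a$ together with the hypothesis $(a_{m_t},b_{m_t})=(a_{n_t},b_{n_t})$.

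The plan for this step is to decompose the interval $[m_t,n_t)$ along base-$3$ blocks. By \eqref{eq:rec}, $a$ is constant on aligned blocks $\{3k,3k+1,3k+2\}$, with value $b_k$. So $x$ splits as a sum of full blocks, each contributing $3b_k\equiv0\pmod 3$, plus partial blocks at the two ends. Hence
\[
x\equiv \alpha\,a_{m_t}+\beta\,a_{n_t}\pmod 3,
\]
where $\alpha\in\{0,1,2\}$ counts the terms from $m_t$ up to the next multiple of $3$, and $\beta$ counts the terms from the last multiple of $3$ up to $n_t$. This holds as long as the interval does not lie inside a single block; that degenerate case should be handled directly, since then $x=\ell\,a_{m_t}$ with $\ell\in\{1,2\}$. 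We then check the residues. With $m_t\equiv\mu$ and $n_t\equiv\nu \pmod 3$, we have $\alpha\equiv-\mu$ and $\beta=\nu$, so $\alpha+\beta\equiv\nu-\mu\equiv\ell\not\equiv0\pmod 3$. Using $a_{m_t}=a_{n_t}$, this gives $x\equiv(\alpha+\beta)\,a_{m_t}\not\equiv0\pmod 3$. It follows that $N(m,n)\ne0$ and $\vp(N(m,n))=t=\vp(n-m)$.

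If the block-boundary bookkeeping turns out to be messier than expected, the fallback is to check the congruence $x\equiv(\alpha+\beta)a_{m_t}$ directly over the nine residue pairs $(\mu,\nu)$.
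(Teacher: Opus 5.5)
Your proposal is correct and matches the paper's proof essentially step for step. It uses the same descent giving $N(m,n)=(-3)^tN(m_t,n_t)$, and the same base case reducing to $3\nmid X_{n_t}-X_{m_t}$ via $a_{m_t}=a_{n_t}$. Your block decomposition is just Eq.~\eqref{eq:sums} unrolled: the paper reads off $X_{3v+s}-X_{3u+r}=3(Y_v-Y_u)+sb_v-rb_u\equiv(s-r)b_u \pmod 3$ directly, using $b_u=a_{m_t}=a_{n_t}=b_v$, so the single-block case needs no separate treatment.
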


\begin{proof}
Let $t=\vp(n-m)$ and set $(m_0,n_0)=(m,n)$.  We recursively
define pairs $(m_j,n_j)$ for $0\leq j\leq t$.  Suppose that $j<t$.
Then $n_j-m_j=(n-m)/3^j$ is divisible by $3$, so $m_j$ and $n_j$ have the
same remainder modulo $3$.  Thus, for a unique $r_j\in\{0,1,2\}$,
we may write
\[
 m_j=3m_{j+1}+r_j,
 \qquad
 n_j=3n_{j+1}+r_j.
\]
For each $r\in\{0,1,2\}$, the map
\[
 (a,b)\longmapsto (b,(-1)^r a)
\]
is injective on $\{1,-1\}^2$.  Hence the recursion~\eqref{eq:rec},
together with $(a_{m_j},b_{m_j})=(a_{n_j},b_{n_j})$, implies
\[
 (a_{m_{j+1}},b_{m_{j+1}})
   =(a_{n_{j+1}},b_{n_{j+1}}).
\]
Applying Eq.~\eqref{eq:sums} with $n=m_{j+1}$ and
$n=n_{j+1}$, in both cases with $r=r_j$, and subtracting,
the correction terms cancel, giving
\[
 X_{n_j}-X_{m_j}=3(Y_{n_{j+1}}-Y_{m_{j+1}}),
 \qquad
 Y_{n_j}-Y_{m_j}=X_{n_{j+1}}-X_{m_{j+1}}.
\]
For $0\leq j\leq t$, put
\[
 D_j=(X_{n_j}-X_{m_j})^2-3(Y_{n_j}-Y_{m_j})^2.
\]
The preceding identities give $D_j=-3D_{j+1}$ for $j<t$.
Iterating yields
\[
 D_0=(-3)^tD_t.
\]

By construction, $n_t-m_t$ is not divisible by $3$.  Write
$m_t=3u+r$ and $n_t=3v+s$, where $r,s\in\{0,1,2\}$ and $r\ne s$.
Since $a_{m_t}=a_{n_t}$,  Eq.~\eqref{eq:rec} gives $b_u=b_v$.
It follows from~\eqref{eq:sums} that
\[
 X_{n_t}-X_{m_t}
   \equiv s b_v-r b_u
   =(s-r)b_u\not\equiv0\pmod3.
\]
Consequently,
\[
 D_t\equiv (X_{n_t}-X_{m_t})^2\equiv1\pmod3.
\]
Thus $D_t$ is nonzero and not divisible by $3$.  Since
$D_0=(-3)^tD_t$, we conclude that $D_0\ne0$ and
\[
 \vp\!\left((X_n-X_m)^2-3(Y_n-Y_m)^2\right)
   =\vp(D_0)=t=\vp(n-m).
\]
\end{proof}

\begin{lemma}\label{lem:core}
Let $E=\{n\ge0:(a_n,b_n)=(1,1)\}$.  No four points $R_n$ with $n\in E$
are collinear.
\end{lemma}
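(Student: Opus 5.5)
Suppose, for a contradiction, that $a<b<c<d$ all lie in $E$ and that $R_a,R_b,R_c,R_d$ are collinear. The plan is to copy the argument used for the auxiliary walk in the proof of Theorem~\ref{thm:main}. Here Lemma~\ref{lem:ternary-chords} and the quadratic form $N(x,y)=x^2-3y^2$ take the place of Lemma~\ref{lem:chords} and the squared modulus. The one new ingredient is an elementary fact about $3$-adic valuations: with $3$ in place of $2$, we need four points rather than three to force a contradiction.

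Write $p=b-a$, $q=c-b$ and $r=d-c$. The last coordinate of $R_n$ is $n$, so the affine parameter along the line is fixed, as in~\eqref{eq:slopes}. Hence for every pair $m<n$ among $a,b,c,d$ the vector
\[
 \Bigl(\frac{X_n-X_m}{n-m},\ \frac{Y_n-Y_m}{n-m}\Bigr)
\]
is one and the same vector $(x,y)\in\Q^2$. Since all indices lie in $E$, every such pair satisfies $(a_m,b_m)=(a_n,b_n)=(1,1)$, so Lemma~\ref{lem:ternary-chords} applies to each of the six pairs. It shows that $N$ of each difference vector is nonzero and has $3$-adic valuation $\vp(n-m)$. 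Because $N$ is homogeneous of degree $2$, we get
\[
 \vp\bigl(N(x,y)\bigr)=\vp(n-m)-2\vp(n-m)=-\vp(n-m).
\]
Since $N(x,y)$ is a single nonzero rational, all six gaps have the same $3$-adic valuation:
\[
 \vp(p)=\vp(q)=\vp(r)=\vp(p+q)=\vp(q+r)=\vp(p+q+r)=:v.
\]

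Now let $p'=p/3^v$, $q'=q/3^v$ and $r'=r/3^v$. None of $p'$, $q'$, $r'$, $p'+q'$, $q'+r'$, $p'+q'+r'$ is divisible by $3$. Since $p',q'\in\{1,2\}\pmod 3$ and $p'+q'\not\equiv0\pmod3$, we must have $p'\equiv q'\pmod 3$. By the same reasoning $q'\equiv r'\pmod3$. But then $p'+q'+r'\equiv 3p'\equiv0\pmod 3$, which is a contradiction.

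The only point needing care is the uniform use of Lemma~\ref{lem:ternary-chords}, including its nonvanishing statement. This guarantees that $N(x,y)\neq0$, so the valuation comparison is meaningful. The restriction to $E$ is exactly what makes the lemma's hypothesis $(a_m,b_m)=(a_n,b_n)$ available for all pairs. I expect no other obstacle; the residue-class pigeonhole step is the heart of the argument, and it explains why four points, rather than three, are needed.
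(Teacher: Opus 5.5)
Your proof is correct and follows the paper's argument: Lemma~\ref{lem:ternary-chords}, together with the degree-$2$ homogeneity of $x^2-3y^2$, forces all six gaps to have one common $3$-adic valuation, and a mod-$3$ pigeonhole step then gives the contradiction. The only difference is cosmetic: you phrase the last step through the consecutive gaps $p',q',r'$ (forcing $p'\equiv q'\equiv r'$), whereas the paper observes that the four normalized offsets $(n_i-n_0)/3^e$ would need pairwise distinct residues modulo $3$.
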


\begin{proof}
Suppose $R_{n_0},\ldots,R_{n_3}$, with $n_0<\cdots<n_3$ in $E$, lie
on one line.  There are rational slopes $\alpha,\beta$ such that every
chord has the form
\[
 (X_{n_j}-X_{n_i},Y_{n_j}-Y_{n_i})
   =(n_j-n_i)(\alpha,\beta).
\]
Put $c=\alpha^2-3\beta^2$.  Lemma~\ref{lem:ternary-chords} implies
that $c\ne0$ and, for every $i<j$, gives
\[
 \vp(n_j-n_i)=\vp(c)+2\vp(n_j-n_i)
\]
by Eq.~\eqref{eq:val}.  Hence all six differences have one common
valuation $e=-\vp(c)\geq0$.  The four integers
\[
 \frac{n_i-n_0}{3^e}\qquad(0\leq i\leq3)
\]
would therefore have pairwise distinct residues modulo $3$, which is
impossible.
\end{proof}

Enumerate $E$ as
\[
 0=N_0<N_1<N_2<\cdots.
\]

\begin{proposition}\label{prop:return-steps}
For every $k\geq 0$,
\[
 N_{k+1}-N_k\in\{2,4,6,8\}.
\]
More precisely, if
\[
 r=\frac{N_{k+1}-N_k}{2},
\]
then
\begin{equation}\label{eq:return}
 \frac12\bigl(R_{N_{k+1}}-R_{N_k}\bigr)=d_r,
\end{equation}
where $d_r$ is defined by the 
following table:
\begin{equation}\label{eq:d_r_vectors}
 \begin{array}{c|cccc}
  r&1&2&3&4\\ \hline
  d_r&(1,0,1)&(-1,1,2)&(0,-1,3)&(-2,0,4)
 \end{array} \ .
\end{equation}
\end{proposition}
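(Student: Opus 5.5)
The plan is to reduce everything to a finite check by grouping indices into blocks of nine consecutive integers $9m,\dots,9m+8$. Applying \eqref{eq:rec} twice gives $a_{9m+3f_1+f_0}=(-1)^{f_1}a_m$ and $b_{9m+3f_1+f_0}=(-1)^{f_0}b_m$. So inside block $m$ the sign pair $(a_n,b_n)$ is the block type $(A,B)=(a_m,b_m)$ multiplied coordinatewise by a fixed pattern depending only on the offset $k=3f_1+f_0\in\{0,\dots,8\}$. For $k=0,\dots,8$ this pattern is
\[
(+,+),(+,-),(+,+),(-,+),(-,-),(-,+),(+,+),(+,-),(+,+).
\]
It follows that the elements of $E$ in a block of type $(A,B)$ are exactly the offsets where the pattern equals $(A,B)$:
\[
\{0,2,6,8\},\ \{1,7\},\ \{3,5\},\ \{4\}
\]
for the types $(+,+)$, $(+,-)$, $(-,+)$, $(-,-)$ respectively. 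In particular every block contains at least one element of $E$.

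First I establish the set of possible gaps. Every $n\in E$ has even ternary digit sum, hence is even, because $3\equiv1\pmod 2$. Therefore all gaps are even. Within a block the gaps are $2,4,2$, $6$ or $2$, according to the type. A gap across a block boundary has the form $9+k'-k$, where $k$ is the last offset in block $m$ and $k'$ is the first offset in block $m+1$. Since every last offset is at least $4$ and every first offset is at most $4$, such a gap is at most $9$. Being even and positive, it is at most $8$. This gives $N_{k+1}-N_k\in\{2,4,6,8\}$.

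For the displacement identity \eqref{eq:return} I need, for each gap, the sums $\sum a_j$ and $\sum b_j$ over $N_k\le j<N_{k+1}$; the last coordinate of $R$ is automatically $2r$. Within-block gaps are read directly from the pattern above. For example, in a $(+,+)$ block the gap from offset $0$ to $2$ uses the pairs $(+,+),(+,-)$, whose sum is $(2,0)=2(1,0)$, matching $d_1$.

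Cross-block gaps depend on the ordered pair of types of blocks $m$ and $m+1$, that is, on $(a_m,b_m,a_{m+1},b_{m+1})$. I will therefore determine which consecutive type pairs can occur. The transition $n\mapsto n+1$ changes $(a_n,b_n)$ according to the ternary carry. Without a carry, only $b$ flips. With a carry through $c$ trailing $2$'s, the digit parities change in a way that can be computed from \eqref{eq:rec}; in particular the pair $(a_{m+1},b_{m+1})$ is obtained from $(a_m,b_m)$ by a specific sign rule depending on the number of trailing $2$'s. I would simply allow all $16$ type pairs, which is a superset of the realizable ones, and verify each resulting gap. Each is a short sum of at most eight known sign pairs, taken from the tail of one block and the head of the next.

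The main obstacle is this cross-block verification. The claim is that the displacement depends only on the gap length $2r$, which is not obvious a priori, since different type pairs could produce the same gap with different sums. If some unrealizable type pair gives a mismatching sum, I will use the carry analysis to exclude it.
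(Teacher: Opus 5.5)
Your proposal is correct and is essentially the paper's first proof: the same mod-$9$ block decomposition, the same offset sets $\{0,2,6,8\},\{1,7\},\{3,5\},\{4\}$, and a finite check of within-block and adjacent-block transitions, where the paper lists the $13$ realizable ones. The one difference is that you use parity ($n\in E\Rightarrow n$ even) instead of the paper's carry argument. This is a slightly cleaner route to $\{2,4,6,8\}$, and it also settles your cross-block worry without any carry analysis: the eight type pairs that do not differ in exactly one coordinate all give odd gaps, so they are excluded, and the remaining eight all give sums matching $d_r$.
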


We present two proofs for this proposition, so readers can choose the proof best suited to their background.
\begin{proof}[First proof.]
If $n=9q+3u+v$, with $u,v\in\{0,1,2\}$, then by~\eqref{eq:rec} (applied twice)
\[
 (a_n,b_n)=((-1)^ua_q,(-1)^vb_q).
\]
The residues modulo $9$ giving state $(1,1)$ given $(a_q,b_q)$ are respectively
\[
\begin{array}{c|cccc}
(a_q,b_q)&(1,1)&(1,-1)&(-1,1)&(-1,-1)\\ \hline
n-9q&0,2,6,8&1,7&3,5&4
\end{array}.
\]
Notice that $\left(\{(1,1),(1,-1),(-1,1),(-1,-1)\}, \cdot \right)$, where $\cdot$ is the entrywise multiplication forms the Klein-four group, i.e., every element is its inverse and so the table can also be read reversely; starting from $(a_0,b_0)=(1,1)$ one obtains the latter $(a_{n-9q},b_{n-9q})$.

When $q$ is increased by one, a ternary carry changes the parity of exactly
one of the two alternating digit sums (notice that $n=\sum_{j\ge0}f_j3^j  \equiv \sum_{j\ge0} f_j \pmod 2$).
Noticing that the corresponding residue $n-9q$ changes parity as well, reading consecutive entries in the
above table therefore shows that
$N_{k+1}-N_k\in\{2,4,6,8\}.$

In each of the cases, the sum of the $(a_j,b_j)$ depends solely on the difference and one can check~\eqref{eq:d_r_vectors}.

For this calculation, one sums the corresponding $(a_q,b_q),$ multiplied with the base state (to correspond with $(1,1)$).
For example, if one has $N_i \equiv 5 \pmod 9$ and $N_{i+1} \equiv 4 \pmod 9$,
we have 
\begin{align*}
&(-1,1)\cdot \left( (-1,1)+(1,1)+(1,-1)+(1,1) \right)+ (-1,-1)\cdot \left((1,1)+(1,-1)+(1,1)+(-1,1) \right)\\=&(-1,1)\cdot(2,2)+(-1,-1)\cdot(2,2)=(-4,0), \mbox{ which is twice } (-2,0). \end{align*}
The thirteen cases, computed as above, can be summarized as follows

\begin{table}[ht]
\centering
\renewcommand{\arraystretch}{1.2}
\begin{tabular}{ccl}
\hline
$N_{k+1}-N_k$
&
$(N_k\bmod 9)\longrightarrow (N_{k+1}\bmod 9)$
&
$\frac12\bigl(R_{N_{k+1}}-R_{N_k}\bigr)$
\\
\hline
$2$
&
$0\to2,\;3\to5,\;6\to8,\;7\to0,\;8\to1$
&
$(1,0,1)=d_1$
\\
$4$
&
$2\to6,\;5\to0,\;8\to3$
&
$(-1,1,2)=d_2$
\\
$6$
&
$1\to7,\;4\to1,\;7\to4$
&
$(0,-1,3)=d_3$
\\
$8$
&
$4\to3,\;5\to4$
&
$(-2,0,4)=d_4$
\\
\hline
\end{tabular}
\caption{The thirteen possible transitions between consecutive
elements of \(E\). An arrow \(r\to s\) records
\(N_k\equiv r\pmod 9\) and \(N_{k+1}\equiv s\pmod 9\).}
\label{tab:return-transitions}
\end{table} \qedhere
\end{proof}

\begin{proof}[Second proof.]
Set
\[
 A=(1,1),\qquad B=(1,-1),\qquad
 C=(-1,1),\qquad D=(-1,-1).
\]
The recurrence
\[
 a_{3n+s}=b_n,\qquad b_{3n+s}=(-1)^s a_n
 \qquad(s\in\{0,1,2\})
\]
induces the three-uniform morphism
\begin{equation}\label{eq:mu}
 \mu(A)=ABA,\qquad \mu(B)=CDC,\qquad
 \mu(C)=BAB,\qquad \mu(D)=DCD.
\end{equation}
Thus the word
\[
 \mathbf x=((a_n,b_n))_{n\geq0},
\]
written over the alphabet $\{A,B,C,D\}$, is the fixed point
$\mu^\omega(A)$.  The occurrences of $A$ in $\mathbf x$ are precisely
the positions $N_0,N_1,N_2,\ldots$.

Define morphisms $\varphi$ and $\eta$ by
\begin{equation}\label{eq:return-morphisms}
\begin{array}{c|c|l|c}
 j&\varphi(j)&\eta(j)&|\eta(j)|/2\\ \hline
 0&01&AB&1\\
 1&0020&ACDC&2\\
 2&034&ABDCDB&3\\
 3&00224&ACDCDCDB&4\\
 4&03220&ABDCDCDC&4.
\end{array}
\end{equation}
Let $\mathbf y=\varphi^\omega(0)$.  Direct substitution gives
\begin{align*}
 \mu(\eta(0))&=\eta(0)\eta(1),\\
 \mu(\eta(1))&=\eta(0)\eta(0)\eta(2)\eta(0),\\
 \mu(\eta(2))&=\eta(0)\eta(3)\eta(4),\\
 \mu(\eta(3))&=\eta(0)\eta(0)\eta(2)\eta(2)\eta(4),\\
 \mu(\eta(4))&=\eta(0)\eta(3)\eta(2)\eta(2)\eta(0).
\end{align*}
Equivalently,
\[
 \mu\circ\eta=\eta\circ\varphi.
\]
Since $\varphi(\mathbf y)=\mathbf y$, it follows that
\[
 \mu(\eta(\mathbf y))
   =\eta(\varphi(\mathbf y))
   =\eta(\mathbf y).
\]
The word $\eta(\mathbf y)$ begins with $A$, and $\mu$ has a unique fixed
point beginning with $A$.  Hence
\[
 \eta(\mathbf y)=\mathbf x.
\]

Every word $\eta(j)$ begins with $A$ and contains no other occurrence of
$A$.  Therefore the boundaries of the consecutive $\eta$-blocks in
$\eta(\mathbf y)=\mathbf x$ are exactly the positions $N_k$.  In
particular, for every $k\geq0$,
\begin{equation}\label{eq:return-block}
 \mathbf x[N_k..N_{k+1}-1]=\eta(y_k),
 \qquad
 N_{k+1}-N_k=|\eta(y_k)|.
\end{equation}
The length column of~\eqref{eq:return-morphisms} now gives
$N_{k+1}-N_k\in\{2,4,6,8\}$.

It remains to compute the corresponding displacement.  Regard each letter
$L\in\{A,B,C,D\}$ as its associated vector in $\{\pm1\}^2$.  From the
definition of $R_n$ and~\eqref{eq:return-block},
\[
 R_{N_{k+1}}-R_{N_k}
 =\left(
    \sum_{L\text{ in }\eta(y_k)}L,
    |\eta(y_k)|
  \right),
\]
where letters are counted with multiplicity.  The five possible blocks give
\begin{equation}\label{eq:displacements}
\begin{array}{c|c|c}
 j&\displaystyle\sum_{L\text{ in }\eta(j)}L
  &\displaystyle\frac12\left(
       \sum_{L\text{ in }\eta(j)}L,\ |\eta(j)|
     \right)\\ \hline
 0&(2,0)&(1,0,1)=d_1\\
 1&(-2,2)&(-1,1,2)=d_2\\
 2&(0,-2)&(0,-1,3)=d_3\\
 3&(-4,0)&(-2,0,4)=d_4\\
 4&(-4,0)&(-2,0,4)=d_4.
\end{array}
\end{equation}
Together with~\eqref{eq:return-block}, this proves
\eqref{eq:return}.
\end{proof}

\begin{theorem}
There is an infinite standard-basis walk in $\N^4$ with no $4$ collinear
points.
\label{thm:d4}
\end{theorem}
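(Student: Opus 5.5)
The plan is to mimic the proof of Theorem~\ref{thm:main}: build an auxiliary walk whose points are the $R_{N_k}$, $k\ge0$, and then lift its increments to standard basis vectors of $\N^4$.

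By Proposition~\ref{prop:return-steps}, each increment $R_{N_{k+1}}-R_{N_k}$ equals $2d_r$ for some $r\in\{1,2,3,4\}$. So the increments take only four values, $2d_1,\dots,2d_4\in\Z^3$. Set $Q_k=R_{N_k}$ and define the walk $P_k\in\N^4$ by $P_0=0$ and
\[
 P_{k+1}=P_k+{\bf e}_{4,r-1}, \qquad r=\tfrac12\,(N_{k+1}-N_k).
\]
Let $T:\mathbb R^4\to\mathbb R^3$ be the linear map with $T({\bf e}_{4,r-1})=2d_r$. Then $T(P_k)=Q_k$ for all $k$, by induction on $k$.

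The points $P_k$ are pairwise distinct, since the coordinate sum of $P_k$ is $k$. The $Q_k$ are also pairwise distinct, because the last coordinate of $Q_k$ is $N_k$, which is strictly increasing. A linear map sends collinear points to collinear points. Hence four collinear points among the $P_k$ would give four distinct collinear points $R_{N_{k_0}},\dots,R_{N_{k_3}}$ with all indices in $E$. Lemma~\ref{lem:core} rules this out, so the walk $(P_k)$ has no $4$ collinear points.

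The main obstacle is already handled by Lemma~\ref{lem:core} and Proposition~\ref{prop:return-steps}. What remains is to check that the bookkeeping is correct. First, only four distinct increment vectors occur, so the walk uses just four basis directions. Second, distinctness of the images is needed so that the collinear image points are genuinely four different points; this holds because $N_k$ is strictly increasing. Optionally, one can also give the explicit step word $\tau'(\varphi^\omega(0))$, with $\tau'$ sending letters $3,4$ to the same letter, obtained from the second proof of Proposition~\ref{prop:return-steps}.
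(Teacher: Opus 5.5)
Your proposal is correct and is essentially the paper's proof. The only difference is the normalization: you send ${\bf e}_{4,r-1}\mapsto 2d_r$ so that $P_k\mapsto R_{N_k}$, whereas the paper sends ${\bf e}_{4,r-1}\mapsto d_r$ so that $P_k\mapsto R_{N_k}/2$; both versions then invoke Lemma~\ref{lem:core} and use the distinct last coordinates of the images in the same way.
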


\begin{proof}
At the $k$th step use ${\bf e}_{4,r-1}$, where $r=(N_{k+1}-N_k)/2$.  The linear map
sending ${\bf e}_{4,r-1}$ to $d_r$ sends the $k$th walk vertex to $R_{N_k}/2$.
Thus four collinear walk points would have four collinear images; the
images are distinct because their third coordinates are the distinct
numbers $N_k/2$.  Lemma~\ref{lem:core} gives the contradiction.
\end{proof}

\begin{theorem}\label{thm:d3}
There is an infinite standard-basis walk in $\N^3$ with no seven collinear
points.
\end{theorem}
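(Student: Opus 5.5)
The plan is to repeat the strategy of Theorem~\ref{thm:d4} with a larger index set. The target is at most $6$ collinear points, which means at most $3$ from each of two state classes. I will choose two of the four states $A,B,C,D$ (in the notation of the second proof of Proposition~\ref{prop:return-steps}) and take $S$ to be the set of positions $n$ with $(a_n,b_n)$ in one of those two states. The walk in $\N^3$ will follow the points $R_n$, $n\in S$, in increasing order. The choice of the two states must make the displacements between consecutive elements of $S$ take only three values, and those three values must be linearly independent vectors in $\mathbb R^3$.

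\textbf{Step 1: extend Lemma~\ref{lem:core} to every state class.} Lemma~\ref{lem:ternary-chords} already applies to any $m<n$ with $(a_m,b_m)=(a_n,b_n)$, not only to state $A$. So the proof of Lemma~\ref{lem:core} carries over word for word. For each fixed state $L\in\{A,B,C,D\}$, no four points $R_n$ with $(a_n,b_n)=L$ are collinear. Suppose $S$ is the union of two state classes and seven points $R_n$ with $n\in S$ are collinear. By pigeonhole, four of them lie in one class, which is a contradiction. Hence no seven points $R_n$, $n\in S$, are collinear.

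\textbf{Step 2: derive the return structure of $S$.} The word $\mathbf x=\mu^\omega(A)$ is defined by \eqref{eq:mu}. Treat the two chosen states as a single marker. Each factor of $\mathbf x$ from one occurrence of a marker up to just before the next one is a return block. I will show there are only finitely many return blocks. I will then find a morphism $\varphi'$ on the block labels and a coding $\eta'$ into $\{A,B,C,D\}^*$ such that $\mu\circ\eta'=\eta'\circ\varphi'$. This is exactly the method of the second proof of Proposition~\ref{prop:return-steps}. It gives $\mathbf x=\eta'(\varphi'^\omega(j_0))$, and the block boundaries are exactly the elements of $S$. For each block I then compute the displacement $\bigl(\sum L,\ |\text{block}|\bigr)$. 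Natural first candidates for the pair of states are $\{A,D\}$ and $\{B,C\}$, since $A+D=B+C=(0,0,2)$ and these choices look symmetric. I would compute the return blocks for all six pairs and keep a pair whose displacements take at most three distinct values that span $\mathbb R^3$. If no pair works, I would fall back to grouping consecutive return blocks into longer blocks, or to a slightly different union of classes.

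\textbf{Step 3: conclude.} Label the three displacement vectors $v_1,v_2,v_3$. Replace each return step $v_j$ by ${\bf e}_{3,j-1}$. The linear map $T$ with $T({\bf e}_{3,j-1})=v_j$ sends the $k$th walk vertex to $R_{s_k}-R_{s_0}$, where $s_0<s_1<\cdots$ enumerates $S$. These images are distinct because their third coordinates are the distinct integers $s_k-s_0$. Therefore seven collinear walk points would give seven collinear points $R_n$ with $n\in S$, which Step~1 rules out. Linear independence of the $v_j$ is not even needed for this argument, only linearity of $T$.

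The main obstacle is Step~2. We need a choice of $S$ whose return blocks collapse to only three displacement vectors, and this is a finite computation with the morphism $\mu$ that may require trying several candidate pairs of states. Once such a set is found, verifying the intertwining identity $\mu\circ\eta'=\eta'\circ\varphi'$ and tabulating the displacements is routine.
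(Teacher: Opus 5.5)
Your Steps 1 and 3 are sound. Lemma~\ref{lem:ternary-chords} only needs equal states, so each state class contains at most three collinear points $R_n$, and pigeonhole gives at most six for a union of two classes.

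The gap is Step 2. It is not a routine search that will eventually succeed: it fails for every one of the six pairs.

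\begin{itemize}
\item Since $a_nb_n=(-1)^{f_0+f_1+\cdots}=(-1)^n$, the states $A,D$ occupy exactly the even positions and $B,C$ the odd ones. So for $\{A,D\}$ the return blocks are $AB,AC,DB,DC$, with the four distinct displacements $(\pm2,0,2)$ and $(0,\pm2,2)$. The pair $\{B,C\}$ behaves the same way.
\item For $\{A,B\}$ (the positions with $a_n=1$), the blocks $A$, $B$, $ACDC$, $BDCD$ already occur in $\mu^3(A)$. Their displacements are $(1,1,1)$, $(1,-1,1)$, $(-2,2,4)$, $(-2,-2,4)$.
\item For $\{A,C\}$ (the positions with $b_n=1$), the blocks $A$, $C$, $AB$, $CD$, $ABD$, $CDB$ give six distinct vectors.
\item The pairs $\{C,D\}$ and $\{B,D\}$ are symmetric to these.
\end{itemize}

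A single class gives the four vectors $d_1,\dots,d_4$. Three or more classes no longer give the bound six. Your fallback (grouping blocks, or ``a slightly different union'') names no candidate, and grouping consecutive blocks typically produces more distinct displacements, not fewer. So as written, your framework cannot be completed: it insists that every walk vertex be an image of some $R_n$, and no natural choice of $S$ then has only three return vectors.

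The missing idea is that the walk's vertices do not all have to be images of points $R_n$. The paper works as follows.
\begin{itemize}
\item It keeps the single class $E$, whose four return vectors satisfy $d_1+d_4=d_2+d_3$, with $d_1,d_2,d_3$ independent.
\item An invertible linear map sends $d_1,d_2,d_3,d_4$ to ${\bf e}_{3,0}$, ${\bf e}_{3,1}$, ${\bf e}_{3,0}+{\bf e}_{3,2}$, ${\bf e}_{3,1}+{\bf e}_{3,2}$.
\item The two-unit macro-steps are split with ${\bf e}_{3,2}$ taken first, so each new intermediate vertex is $S_k+{\bf e}_{3,2}$.
\item Every vertex therefore lies in $\mathcal S\cup(\mathcal S+{\bf e}_{3,2})$, and a line meets each of these two sets in at most three points.
\end{itemize}
The count $3+3$ thus comes from one set together with a translate of it, not from two state classes. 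Applying the same splitting trick to your $\{A,D\}$ blocks, which satisfy the analogous relation $v_{AB}+v_{DC}=v_{AC}+v_{DB}$, would only give the bound $12$.
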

\begin{proof}
The vectors $d_1,d_2,d_3$ in \eqref{eq:d_r_vectors} are independent
(determinant $6$), and $d_1+d_4=d_2+d_3$. Hence an invertible linear map
may send
\[
 d_1\mapsto {\bf e}_{3,0},\qquad d_2\mapsto {\bf e}_{3,1},\qquad
 d_3\mapsto {\bf e}_{3,0}+{\bf e}_{3,2},
\]
and then necessarily $d_4\mapsto {\bf e}_{3,1}+{\bf e}_{3,2}$.  The images
$S_k$ of $R_{N_k}/2$ therefore form a walk with these four macro-steps,
and Lemma~\ref{lem:core} says that no four $S_k$ are collinear.

Subdivide ${\bf e}_{3,0}+{\bf e}_{3,2}$ as ${\bf e}_{3,2},{\bf e}_{3,0}$ and ${\bf e}_{3,1}+{\bf e}_{3,2}$ as ${\bf e}_{3,2},{\bf e}_{3,1}$.  Every
vertex of the resulting unit-step walk lies in
\[
 \mathcal S\cup(\mathcal S+{\bf e}_{3,2}),\qquad \mathcal S=\{S_k:k\ge0\}.
\]
Each line meets either translate in at most three points, hence contains
at most six points altogether.
\end{proof}

\begin{remark}
The bound six is attained by this construction.  Its vertices at times
$64,70,82,88,100,106$ are
\[
(38,13,13),(41,15,14),(47,19,16),(50,21,17),(56,25,19),(59,27,20),
\]
and lie on a line of primitive direction $(3,2,1)$. Thus Theorem~\ref{thm:d3}
cannot be strengthened to ``at most five'' by merely sharpening its last
estimate.\\
Also in Theorem~\ref{thm:d4} the ``at most three'' is sharp with the given construction, since the second, third and fourth point $P_2=(1,1,0,0),
P_3=(2,1,0,0) \mbox { and }
P_4=(3,1,0,0)$ are collinear.
\end{remark}

\section{A common morphism for Theorems~\ref{thm:d4} and \ref{thm:d3}}\label{sec5}

Use the same $\varphi$ and $\mathbf y$ as above and define
\begin{equation}\label{eq:codings}
\begin{array}{c|c|c|c}
 j&\varphi(j)&\tau_4(j)&\tau_3(j)\\ \hline
 0&01&0&0\\
 1&0020&1&1\\
 2&034&2&20\\
 3&00224&3&21\\
 4&03220&3&21
\end{array}
\end{equation}
Here $\tau_4$ is a letter-to-letter coding; $\tau_3$ is a non-erasing
morphism, and $20$ and $21$ denote two-letter words.

\begin{corollary}
The word $\mathbf w_4=\tau_4(\varphi^\omega(0))$ is the step word of
the four-dimensional walk in Theorem~\ref{thm:d4}, using zero-based direction
labels. In particular, it is weakly abelian cubefree.
\end{corollary}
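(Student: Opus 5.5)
The plan is to identify the $k$th letter of $\mathbf w_4$ with the step label used in the proof of Theorem~\ref{thm:d4}. That proof uses ${\bf e}_{4,r-1}$ at step $k$, where $r=(N_{k+1}-N_k)/2$.

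From the second proof of Proposition~\ref{prop:return-steps} we already have $\eta(\mathbf y)=\mathbf x$ with $\mathbf y=\varphi^\omega(0)=y_0y_1\cdots$. Equation~\eqref{eq:return-block} gives $N_{k+1}-N_k=|\eta(y_k)|$. Therefore
\[
 r=\frac{|\eta(y_k)|}{2},
\]
which is the last column of \eqref{eq:return-morphisms}: it takes the values $1,2,3,4,4$ for $y_k=0,1,2,3,4$.

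The key step is then a direct comparison with the table \eqref{eq:codings}. There $\tau_4(j)=|\eta(j)|/2-1$ for every $j\in\{0,\ldots,4\}$, namely $0,1,2,3,3$. Hence the $k$th letter of $\mathbf w_4$ is $\tau_4(y_k)=r-1$, which is exactly the zero-based index of the basis vector used at step $k$. So $\mathbf w_4$ is the step word of that walk. The only point needing care is that the letters $3$ and $4$ of $\mathbf y$ both correspond to the same macro-step $d_4$, which is consistent with $\tau_4(3)=\tau_4(4)=3$.

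Finally, Theorem~\ref{thm:d4} says this walk has no $4$ collinear points. Proposition~\ref{prop1} with $t=3$, applied to every finite prefix, then shows that $\mathbf w_4$ avoids weak abelian cubes. Any weak abelian cube occurring as a factor lies in some finite prefix, so this suffices.

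I do not expect a real obstacle here. The only thing to check is the indexing: the $k$th step of the walk goes from $R_{N_k}/2$ to $R_{N_{k+1}}/2$, and it must be matched with the letter $y_k$, i.e.\ with the $k$th $\eta$-block.
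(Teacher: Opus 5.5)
Your proposal is correct and follows the paper's proof: both read off $(\mathbf w_4)_k=\tau_4(y_k)=|\eta(y_k)|/2-1=(N_{k+1}-N_k)/2-1$ from Eq.~\eqref{eq:return-block} and the length column of \eqref{eq:return-morphisms}, and then conclude via Proposition~\ref{prop1}. The only differences are cosmetic: the paper re-derives the absence of four collinear points directly from Lemma~\ref{lem:core} instead of citing Theorem~\ref{thm:d4}, and you spell out the passage from finite prefixes to the infinite word, which the paper leaves implicit.
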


\begin{proof}
Equation~\eqref{eq:return-block}, together with the last column
of~\eqref{eq:return-morphisms}, gives
\[
 (\mathbf w_4)_k=\frac{N_{k+1}-N_k}{2}-1.
\]
Thus $\mathbf w_4$ specifies precisely the steps of Theorem~\ref{thm:d4}, with
$d_r$ represented by ${\bf e}_{4,r-1}$. Equivalently, the linear map
${\bf e}_{4,r-1}\mapsto d_r$ sends the $k$th walk vertex to
$R_{N_k}/2$. Lemma~\ref{lem:core} rules out four collinear images; these images
are distinct because their third coordinates are $N_k/2$.
Hence the walk has no four collinear points, and Proposition~\ref{prop1}
proves the assertion about weak abelian cubes.
\end{proof}

\begin{corollary}
The word $\mathbf w_3=\tau_3(\varphi^\omega(0))$ is the step word of
the three-dimensional walk in Theorem~\ref{thm:d3}. In particular, it avoids
weak abelian sixth powers.
\end{corollary}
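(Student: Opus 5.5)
We prove the corollary by showing that $\mathbf w_3$ spells, letter by letter, the unit-step walk of Theorem~\ref{thm:d3}. The argument parallels the one just given for $\mathbf w_4$.

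First, identify the macro-steps. By Equation~\eqref{eq:return-block} and the last column of~\eqref{eq:return-morphisms}, the $k$th macro-step of the walk $(S_k)$ is the image of $d_r$, where $r=(N_{k+1}-N_k)/2=|\eta(y_k)|/2$. Reading that column, the letters $y_k=0,1,2,3,4$ give $r=1,2,3,4,4$. The proof of Theorem~\ref{thm:d3} fixes the images of the $d_r$, so the macro-steps are
\[
{\bf e}_{3,0},\quad {\bf e}_{3,1},\quad {\bf e}_{3,0}+{\bf e}_{3,2},\quad {\bf e}_{3,1}+{\bf e}_{3,2},\quad {\bf e}_{3,1}+{\bf e}_{3,2}.
\]

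Next, match these with $\tau_3$. The proof of Theorem~\ref{thm:d3} subdivides ${\bf e}_{3,0}+{\bf e}_{3,2}$ as the unit steps ${\bf e}_{3,2},{\bf e}_{3,0}$ and ${\bf e}_{3,1}+{\bf e}_{3,2}$ as ${\bf e}_{3,2},{\bf e}_{3,1}$. With the zero-based labels, these subdivisions are exactly the words $20$ and $21$. Hence $\tau_3(y_k)$ is the label sequence of the unit steps replacing the $k$th macro-step. Since $\tau_3$ is a non-erasing morphism and $\mathbf y=\varphi^\omega(0)$, concatenating over $k$ gives $\tau_3(\mathbf y)=\mathbf w_3$, which is therefore the step word of the walk. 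The only check needed here is that the order of the two letters in each block agrees with the subdivision order in the proof, which is immediate from the table.

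Finally, Theorem~\ref{thm:d3} says this walk has no seven collinear points. Proposition~\ref{prop1} with $t=6$ applies to every finite prefix, so $\mathbf w_3$ has no factor that is a weak abelian sixth power, since such a factor would give seven collinear walk points. There is no real obstacle here; the only care needed is the index bookkeeping between the blocks $\eta(j)$, the values $r$, and the columns of~\eqref{eq:codings}.
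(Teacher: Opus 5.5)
Your proposal is correct and follows the paper's route: you match the return letters $0,1,2,3,4$ to the macro-steps $d_1,d_2,d_3,d_4,d_4$, observe that the subdivisions in Theorem~\ref{thm:d3} are exactly the words $0,1,20,21,21$ defining $\tau_3$, and then apply Proposition~\ref{prop1} to finite prefixes using the six-point bound. The differences are cosmetic: you read off $r$ from the length column via Proposition~\ref{prop:return-steps} rather than directly from~\eqref{eq:displacements}, and you cite Theorem~\ref{thm:d3} instead of re-deriving the $\mathcal S\cup(\mathcal S+{\bf e}_{3,2})$ argument.
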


\begin{proof}
Theorem~\ref{thm:d3} replaces the return steps $d_1,d_2,d_3,d_4$ by the unit-step
words $0,1,20,21$, respectively. By Eq.~\eqref{eq:displacements}, the
return letters $0,1,2,3,4$ therefore give exactly the images
$0,1,20,21,21$ defining $\tau_3$. Consequently
$\mathbf w_3=\tau_3(\mathbf y)$ specifies the stated construction.

For completeness, let $S_k$ be the image of $R_{N_k}/2$ under the
invertible linear map in Theorem~\ref{thm:d3}. Lemma~\ref{lem:core} implies that each line
contains at most three points of $\mathcal S=\{S_k:k\geq0\}$.
Every extra vertex introduced by a two-step image is $S_k+{\bf e}_{3,2}$.
Thus all points lie in
$\mathcal S\cup(\mathcal S+{\bf e}_{3,2})$, and every line contains at most
six points. Proposition~\ref{prop1} then rules out weak abelian sixth powers.
\end{proof}

Alternatively we can define the word in Theorem~\ref{thm:d3} as follows:
\begin{equation}\label{eq:eight}
\begin{array}{c|c|c}
 L&\theta(L)&\pi(L)\\ \hline
 a&ab&0\\
 b&aacda&1\\
 c&a&2\\
 d&efgh&0\\
 e&a&2\\
 f&acdcdgh&1\\
 g&a&2\\
 h&efcdcda&1
\end{array}
\qquad
 \mathbf w_3=\pi(\theta^\omega(a)).
\end{equation}
To verify this, define
\[
 H(0)=a,\quad H(1)=b,\quad H(2)=cd,\quad
 H(3)=ef,\quad H(4)=gh.
\]
The table satisfies $\theta \circ  H=H \circ \varphi$ and $\pi\circ H=\tau_3$.
It follows that $H(\mathbf y)$ is a fixed point of $\theta$ starting
with $a$, hence $H(\mathbf y)=\theta^\omega(a)$, and therefore
\[
 \pi(\theta^\omega(a))=\pi(H(\mathbf y))
   =\tau_3(\mathbf y)=\mathbf w_3.
\]
The limit exists because $\theta(a)=ab$ and the morphism is
non-erasing, with $\lvert\theta^n(a)\rvert$ tending to infinity.

The construction of $\varphi$ is based on return words.
For a general statement and proof, see Durand~\cite[Proposition~19]{Dur98}:
the derived sequence at a nonempty prefix of the fixed point of a primitive morphism
is itself fixed by a primitive morphism. The identity
$\mu\circ\eta=\eta\circ\varphi$ above is a direct certificate in this instance.
Proposition~9 of the same paper gives the general conversion from a
non-erasing morphic image of a primitive fixed point to a coding of
a primitive fixed point; the eight-letter presentation above makes
that conversion explicit here. 

\section{Final words}

As we mentioned, we do not know if the result of Theorem~\ref{thm:main} could be improved to $\N^5$ or $\N^4$.  The third author found a possible construction in $\N^5$ 
\cite{Sha26},
but we have not been able to prove that it works.  Namely, consider the cyclic
morphism $\alpha(i)$ defined by adding $i$ modulo $5$
to every letter of $\alpha(0)=01213101314310$.  Then $\alpha^\omega (0)$ appears
to avoid weak abelian squares, at least as far as we have been able to check (600,000 terms), and if correct, this would give a walk in $\N^5$ with no $3$ collinear points, by the translation described in Section~\ref{sec2}.

A website devoted to explaining the results of this paper is available at\\
\centerline{\url{https://basis-walk.q5m.ai/}\, ,} and a github repository with code is\\
\centerline{\url{https://github.com/ekalvi/basis-walk}\, .}

\section*{Declaration of AI usage}

Initial proofs for the results in this paper were obtained with GPT-6 Astra. Everything was checked and rewritten by the authors.


\begin{thebibliography}{99}

\bibitem{AP16}
S. Avgustinovich and S. Puzynina.
\newblock Weak abelian periodicity of infinite words.
\newblock {\it Theory Comput. Syst.} \textbf{59} (2016), 161--179.

\bibitem{Bro71a}
T. C. Brown.
\newblock Is there a sequence on four symbols in which no two adjacent
segments are permutations of one another?
\newblock {\it Amer. Math. Monthly} {\bf 78} (1971), 886--888.

\bibitem{Bro71b}
T. C. Brown.
\newblock Advanced problem 5811.
\newblock {\it Amer. Math. Monthly} {\bf 78} (1971), 798.

\bibitem{CK26}
S. Cambie and E. Kalviainen.
\newblock An infinite small-step $\Z^3$-walk with no collinear triple.
\newblock Arxiv preprint arXiv:2609.01766 [math.CO],
September 1 2026.  Available at
\url{https://arxiv.org/abs/2609.01766v1}.

\bibitem{Dur98}
F. Durand.
\newblock A characterization of substitutive sequences using return words.
\newblock \emph{Discrete Math.} \textbf{179} (1998), 89--101.

\bibitem{FP23}
G. Fici and S. Puzynina.
\newblock Abelian combinatorics on words: a survey.
\newblock {\it Computer Sci. Review} \textbf{47} (2023), article~100532.

\bibitem{Ger79}
J. L. Gerver.
\newblock Long walks in the plane with few collinear points.
\newblock {\it Pacific J. Math.} {\bf 83} (1979), 349--355.

\bibitem{GR79}
J. L.~Gerver and L.~T. Ramsey.
\newblock On certain sequences of lattice points.
\newblock {\it Pacific J. Math.} \textbf{83} (1979), 357--363.

\bibitem{Ko26a}
S. Korsky.
\newblock North-east lattice paths with few collinear vertices.
\newblock Arxiv preprint arXiv:2607.02832 [math.CO], July 2 2026.
\newblock Available at \url{https://arxiv.org/abs/2607.02832}.

\bibitem{Ko26b}
S. Korsky.
\newblock Long lattice paths with no three collinear vertices.
\newblock Arxiv preprint arXiv:2608.07906 [math.CO], August 8 2026.
\newblock Available at \url{https://arxiv.org/abs/2608.07906}.

\bibitem{Lid24}
T. F.~Lidbetter.
\newblock Improved bound for the Gerver--Ramsey collinearity problem.
\newblock {\it Discrete Math.} \textbf{347} (2024), Article~113718.

\bibitem{Mo72}
P. L. Montgomery. 
\newblock Solution to Advanced Problem 5811: 
collinear points on a monotonic polygon.
\newblock {\it Amer. Math. Monthly} \textbf{79} (1972), 1143--1144.

\bibitem{Sha26}
J. Shallit. An infinite walk in \(\mathbb N^{16}\), using only unit steps, with no three collinear points.
\newblock ArXiv preprint arXiv:2609.05780 [math.CO] (2026).  
\newblock Available at
\url{https://arxiv.org/abs/2609.05780}.






\end{thebibliography}
\end{document}